\documentclass[a4paper,12pt]{article}
\usepackage[english]{babel} %substitutie "dutch" for "english" if you want to get "inhoudsopgave" instead of "contents" etcetera
\usepackage{amsfonts,amssymb,amsmath,a4wide,amsbsy} 
\usepackage{amsthm} %for theorems, etc. If no square at end of proof, put \qedhere 
\usepackage{cases}
\frenchspacing

 % probability
 % expectation

 %defines notation for vectors
 %defines notation for tensors
 %defines notation for units

% indicator function (does not exist yet in latex):
%first, we create a macro to change the size of the math-font  
\newcommand{\thefont}[2]{\fontsize{#1}{#2}\fontshape{n}\selectfont}
%then, we put two ones with different size next to each other:
\def\ind{\rlap{\thefont{10pt}{12pt}1}\kern.16em\rlap{\thefont{11pt}{13.2pt}1}\kern.4em}

\providecommand{\be}{\begin{equation}}
\providecommand{\ee}{\end{equation}}

% Second order Partial Derivative with Mixed variables
%Second order Partial Derivative with respect to the same variable
 % braces

\theoremstyle{plain}% default
\newtheorem{thm}{Theorem}%[section]
\newtheorem{lem}[thm]{Lemma}
\newtheorem{cor}[thm]{Corollary}

\theoremstyle{definition}

\title{An improvement of the Boppana-Holzman bound for Rademacher random variables}
\author{Harrie Hendriks and Martien C.A. van Zuijlen}
\date{}

\bibliographystyle{plain}

\begin{document}

\maketitle
%\maketitle
\centerline{DEPARTMENT OF MATHEMATICS} 
\medskip
\centerline{RADBOUD UNIVERSITY  NIJMEGEN}
\begin{center}
	{Heyendaalseweg 135\\ 6525 AJ Nijmegen \\The Netherlands\\
	e-mail corresponding author: M.vanZuijlen@science.ru.nl\\
	\textbf{keywords} Randomly signed sums, Tomaszewski's problem}
\end{center}
%\today

\begin{abstract}
Let $v_1,v_2,...,v_n$  be real numbers whose squares add up to $1$. 
Consider the $2^n$ signed sums of the form $S=\sum_{i=1}^n \pm v_i.$ 
Holzman and Kleitman (1992) proved that at least 
$\frac38=0.375$ of these sums satisfy $|S|\leq 1.$ 
By  using bounds for appropriate moments of $S,$ 
Boppana and Holzman (2017) were able to improve the bound to 
$\frac{13}{32}=0.40625$ and even a bit better to $\frac{13}{32}+9\times10^{-6}.$ 
By following their approach, but using a key result of Bentkus and Dzindzalieta (2015),
we will drastically improve (by more than 5\%) the latter barrier $\frac{13}{32}$ to
$\frac{1}{2}-\frac{\Phi(-2)}{4\Phi(-\sqrt{2})}\approx  0.42768.$
\end{abstract}

\noindent
\textbf{Notice:}
At this place we would like to mention that since the submission of our original manuscript
Ravi Boppana joined with the authors to produce a succinct account of the improvement
in  Boppana, R., Hendriks, H. and Van Zuijlen, M.C.A. (2020), \emph{Tomaszewski's problem on randomly signed sums, revisited},
arXiv:2003.06433 [math.CO].
Moreover Nathan Keller and Ohad Klein submitted a manuscript of a complete proof of the conjecture:
Keller, N. and Klein, O. (2020), \emph{Proof of Tomaszewski's Conjecture on Randomly Signed Sums},
 	arXiv:2006.16834 [math.CO].

 \section{Introduction and  main result}

In this note we will present a considerable improvement on a result of Boppana and Holzman (2017).
We will combine their approach in \cite[Theorem 4]{BH}, based on stopping times, which is a technique
initiated by Ben-Tal \emph{et. al.} \cite{B-T} and refined by Shnurnikov \cite{S}, with a useful result for sums of Rademacher random variables of Bentkus and Dzindzalieta \cite{BD}.

Throughout this paper $n$ is a positive integer, $\epsilon_1,\epsilon_2,...,\epsilon_n$ are iid Rademacher random variables,  $\Phi$ is the standard normal distribution function and the decreasing functions $G$ and $F$ on $(0,\infty)$ are defined as follows
$$G(c)=\frac12\left(1-\frac12\,\frac{1-\Phi(c^{-1/2})}{1-\Phi(\sqrt2)}\right);\;\;\;
F(c)=\frac{1}{2}(1-3c^2).$$
\\
%\noindent
The main result in this paper is the following theorem.   
   \begin{thm}\label{Stelling}
   Let $v_1,v_2,...,v_n$ be real numbers such that
  $\sum_{i=1}^n v_i^2\leq 1$ and let 
  $S:=\sum_{i=1}^n v_i\epsilon_i.$ 
  Then, $$ P(|S|\leq 1)\geq G(\frac{1}{4})=\frac{1}{2}-\frac{1-\Phi(2)}{4(1-\Phi(\sqrt{2}))}\approx 0.42768 .$$
  \end{thm}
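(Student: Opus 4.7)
The plan is to follow the Boppana--Holzman stopping-time framework, substituting the Bentkus--Dzindzalieta (BD) tail estimate for the fourth-moment Chebyshev bound used in \cite{BH}. By padding with zeros one may assume $\sum_{i=1}^n v_i^2 = 1$, and by the symmetry of the Rademacher variables one may take $v_1 \geq v_2 \geq \cdots \geq v_n > 0$. Fix the threshold $c = 1/4$, and let $m$ be the smallest index for which the remaining variance $\sigma_m^2 := \sum_{i>m} v_i^2$ satisfies $\sigma_m^2 \leq c$; such an $m$ exists since $\sigma_0^2 = 1 > c$ and $\sigma_n^2 = 0 \leq c$. Decompose $S = H + R$ with the independent head $H := \sum_{i \leq m} v_i \epsilon_i$ and tail $R := \sum_{i > m} v_i \epsilon_i$, so that $\var(R) \leq c$ and hence $\sqrt{\var(R)} \leq 1/2$.

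The crucial observation is that any level $x \geq 1$ now satisfies $x/\sqrt{\var(R)} \geq 2 \geq \sqrt{2}$, the regime in which BD delivers
$$P(R \geq x) \;\leq\; \frac{1}{4} \cdot \frac{1 - \Phi\bigl(x/\sqrt{\var(R)}\bigr)}{1 - \Phi(\sqrt{2})} \;\leq\; \frac{1}{4} \cdot \frac{1 - \Phi(xc^{-1/2})}{1 - \Phi(\sqrt{2})}.$$
At $x = 1$ the right-hand side is exactly $\tfrac{1}{2} - G(c)$, precisely the tail budget appearing in the theorem. Conditioning on $H$ and using the symmetry of $R$, for $|h| \leq 1$ one has
$$P(|h+R| \leq 1) \;=\; \tfrac{1}{2}\bigl[P(|R| \leq 1+|h|) + P(|R| \leq 1-|h|)\bigr],$$
with an analogous formula (involving a subtraction) for $|h| > 1$. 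Substituting BD into these expressions yields pointwise bounds on $P(|H+R| \leq 1 \mid H)$, and averaging over the $2^m$ sign patterns of $H$ should deliver $P(|S| \leq 1) \geq G(1/4)$. The auxiliary function $F(c) = \tfrac{1}{2}(1 - 3c^2)$---essentially a fourth-moment Chebyshev estimate on $R$, with $F(1/4) = 13/32$ being Boppana and Holzman's constant---is kept in reserve to handle those sign patterns for which $|H|$ is close to or exceeds $1$.

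The principal obstacle is this averaging. A pointwise application of BD is already too weak once $|h|$ moves past roughly $1 + 1/\sqrt{2}$, since $P(|h+R| \leq 1)$ itself falls below $G(1/4)$ there: the gain must therefore be extracted from the collective distribution of $H$. The delicate step is the bookkeeping that shows how the competing bounds $F$ and $G$ interlock over the full range of $|H|$, and the verification that the threshold $c = 1/4$ is the optimal choice---equivalently, that it picks out the balance point between the $F$- and $G$-regimes matched to the $\tfrac{1}{4}$-factor of the Bentkus--Dzindzalieta inequality.
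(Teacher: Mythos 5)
Your proposal has a genuine gap, and it sits exactly where you defer to ``bookkeeping.'' The deterministic split at the first index $m$ with $\sum_{i>m}v_i^2\le\frac14$ does not control the head: the key lemma (the paper's Lemma~\ref{Ons Lemma 3}, and equally the Boppana--Holzman Lemma~3 it replaces) requires $|x|\le 1$ for the conditioning value $x$ of the head, and with your split $P(|H|>1)$ can be substantial, in which case $P(|h+R|\le 1)=\frac12\bigl[P(|R|\le |h|+1)-P(|R|<|h|-1)\bigr]$ can be far below $G(\frac14)$, even zero. You acknowledge this and propose to recover the loss ``from the collective distribution of $H$,'' but you give no mechanism for doing so, and that recovery is the entire content of the proof. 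The paper's mechanism is a \emph{random} stopping time, not a fixed index: $T$ is the first $t$ with $|X_t|>1-v_{t+1}$, where $X_t$ is the partial sum. This forces $|X_T|\le 1$ always (so the lemma always applies), and simultaneously yields the inequality $v_{T+1}>1-X_T$, which---combined with a special interleaved ordering of the weights and Cauchy--Schwarz---produces the bound $\sum_{i>T}v_i^2\le U_K(T)(1+X_T)^2$ with the crucial $(1+|x|)^2$ scaling in the lemma's hypothesis. Your version applies the lemma only with the static bound $\sigma^2\le\frac14$, which discards exactly the $(1+|x|)^2$ slack that makes the argument close.

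Two further discrepancies. First, the remaining difficulty in the paper is not where you locate it: after the stopping-time bound one still has $U_K(K)>\frac14$, and this is resolved by introducing the deterministic companion index $K$ (defined via the nonrandom sums $M_t=\sum_{i\le t}v_i$), observing that $T=K$ occurs exactly when $\epsilon_1,\dots,\epsilon_K$ all agree (probability $2^{1-K}$) and otherwise $T\ge K+2$, and then verifying the convexity inequality $2^{1-K}G(U_K(K))+(1-2^{1-K})G(U_K(K+2))\ge G(\frac14)$ of Lemma~\ref{App}. Nothing in your sketch supplies a substitute for this step. Second, the function $F$ plays no role in the proof at all---it appears only in the Appendix, where $G>F$ is shown to quantify the improvement over \cite{BH}---so keeping it ``in reserve'' for large $|H|$ is not how the two bounds interact; there is no $F$-regime to interlock with. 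As written, the proposal is a plausible opening move followed by an unproved claim that the averaging works out; the stopping-time construction and the $T=K$ versus $T\ge K+2$ dichotomy are the missing ideas.
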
 
  %\end{section}
%
Notice that Boppana and Holzman \cite{BH} found the lower bound $F(\frac14)=13/32=0.40625$,
and additionally improved this result by the term $9\times10^{-6}$. 
For $n\leq 9$ the optimal lower bound  $\frac{1}{2}$  has been obtained in Hendriks and Van Zuijlen \cite{HvZ}.  Also, Van Zuijlen \cite{vZ} obtained this lower bound $\frac{1}{2}$ in case $v_1=v_2=...=v_n,$ and thus solved the old conjecture
of B. Tomaszewski (1986) (see Guy \cite{RG})  in the uniform case.

 Notice that
$G(0):=\lim_{c\downarrow0}G(c)=\frac12$ and
 $F$ is already used in \cite{BH}. 
In the Appendix we prove that $G(c)> F(c)$ for all $c>0,$ but
the important fact is that $G(\frac14)>F(\frac14)$.

For the proof of Theorem  \ref{Stelling} we will need the following improvement of Lemma 3 in \cite{BH}.

  \begin{lem}\label{Ons Lemma 3}
  Let $x$ be a real number such that $|x|\leq 1,$ let $v_1,v_2,...,v_n$ be real numbers such that
\begin{equation}\label{U}
\sigma^2:=\sum_{i=1}^n v_i^2\leq U(1+|x|)^2,
\end{equation}
 and let $Y:=\sum_{i=1}^n v_i\epsilon_i$. 
 Then,
 $$  P(|x+Y|\le 1)\geq 
  \frac{1}{2}-\frac{1}{4}\frac{1-\Phi (U^{-1/2})}{1-\Phi (\sqrt 2)}%=\frac{1}{2}-\frac{\Phi(-U^{-1/2})}{4\Phi (-\sqrt 2)}
  =G(U).$$
  Notice that $G(U)\uparrow \frac{1}{2},$ as $U \downarrow 0.$
  \end{lem}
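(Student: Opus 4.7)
The plan is to use the symmetry $Y\stackrel{d}{=}-Y$ to split $P(|x+Y|\le 1)$ into one piece controlled trivially by symmetry and another piece to which the sharp Gaussian tail bound of Bentkus and Dzindzalieta \cite{BD} applies after rescaling. Since $Y\stackrel{d}{=}-Y$, the probability depends only on $|x|$, so I may assume $x\ge 0$, and then $0\le 1-x\le 1+x$. Writing the event as $\{-1-x\le Y\le 1-x\}$ and using the symmetry of $Y$ once more to flip the lower tail gives
\[
P(|x+Y|\le 1)\;=\;P(Y\le 1-x)\;-\;P(Y>1+x).
\]
Since $Y$ is symmetric about $0$ and $1-x\ge 0$, the first term is at least $1/2$, so the lemma reduces to the one-sided upper tail estimate
\[
P(Y>1+x)\;\le\;\tfrac14\,\frac{1-\Phi(U^{-1/2})}{1-\Phi(\sqrt 2)}.
\]

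For this I would invoke the Bentkus--Dzindzalieta inequality: for coefficients $a_1,\dots,a_n\in\R$ with $\sum a_i^2=1$ and any $t\ge\sqrt 2$,
\[
P\!\left(\sum_{i=1}^n a_i\epsilon_i\ge t\right)\;\le\;\tfrac14\,\frac{1-\Phi(t)}{1-\Phi(\sqrt 2)}.
\]
I would apply this to the normalized sum $Y/\sigma$ (with $\sigma^2=\sum v_i^2$) at the threshold $t=(1+x)/\sigma$. Hypothesis \eqref{U} forces $t\ge U^{-1/2}$, and in the principal range $U\le 1/2$ one also has $t\ge\sqrt 2$, so Bentkus--Dzindzalieta is applicable. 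Monotonicity of $1-\Phi$ then replaces $t$ by $U^{-1/2}$ in the numerator, and combining with the $1/2$ lower bound from the first piece yields exactly $G(U)$.

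The main obstacle I anticipate lies in the leftover range $U>1/2$, where $(1+x)/\sigma$ may fall below $\sqrt 2$ and the Bentkus--Dzindzalieta bound is not directly applicable. Fortunately $G$ is decreasing with $G(1/2)=1/4$, and $G(U)\le 0$ once $U$ exceeds roughly $0.99$, so the statement is vacuous for large $U$; in the intermediate sliver $U\in(1/2,0.99)$ one would close the gap with a supplementary argument, for example by retaining the coarser moment estimates of \cite{BH} in that regime. Since Theorem \ref{Stelling} only invokes the lemma with $U=1/4$, which sits comfortably inside the principal range, this leftover is not critical for the main application. A minor bookkeeping point is that $Y$ is discrete with finitely many atoms, so strict versus non-strict boundary inequalities must be matched carefully when using symmetry, which is routine.
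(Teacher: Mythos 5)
Your proof is correct and follows essentially the same route as the paper: reduce by symmetry to $P(|x+Y|\le 1)\ge \tfrac12 - P(Y>1+x)$ and then apply the Bentkus--Dzindzalieta inequality to $Y/\sigma$, using $U^{-1/2}\sigma\le 1+x$ to pass to the threshold $U^{-1/2}$. Your caveat about the range $U>1/2$ is a fair point that the paper does not address, but it is harmless since the lemma is only ever invoked with $U\le 2/7$.
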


  \begin{proof}
  Because of symmetry around zero of the distribution of $Y$ we may assume that $0\le x\le 1,$ and then
   \begin{align*}
   P(|x+Y| \le 1)&=P(-1-x \le Y \le 1-x)=\frac{1}{2}P(|Y| \le 1+x)+\frac{1}{2}P(|Y| \le 1-x)\geq 
\\&
\geq\frac{1}{2}P(|Y| \le 1+x)=\frac{1}{2}-\frac{1}{2}P(|Y| > 1+x )=\frac{1}{2}-P(Y > 1+x ).
  \end{align*}
   Moreover,  from Bentkus and Dzindzalieta \cite{BD}, or Dzindzalieta's thesis \cite{D} p. 30, Theorem 11, we have for $x\in [0,1],$  with $c_*:=\frac{1}{4(1-\Phi(\sqrt2)}=3.178...,$
  $$P(Y\geq 1+x)\leq P\{Y\geq U^{-1/2}\sigma\}=P(\frac{Y}{\sigma}\geq  U^{-1/2})   \leq c_*(1-\Phi(U^{-1/2}))=\frac{1-\Phi(U^{-\frac{1}{2}})}{4(1-\Phi(\sqrt2)},$$
so  that 
$$P(|x+Y| \le 1)\geq \frac{1}{2}-P(Y > 1+x)\geq 
%\frac{1}{2}-P\{Y\geq 1+x\}=
\frac{1}{2}-\frac{1}{4}\frac{1-\Phi (U^{-1/2})}{1-\Phi (\sqrt 2)}=G(U).$$
   \end{proof}
In \cite{BH} a first improvement of the lower bound 3/8 is based on condition (\ref{U}) 
with $U=2/7$, their further improvement Theorem 4 is roughly based on condition (\ref{U}) 
with $U=1/4$.

   \begin{cor}
By taking $U=\frac{2}{7}$ in Lemma \ref{Ons Lemma 3}, we obtain
$$\left[\sum_{i=1}^n v_i^2\leq \frac{2}{7}(1+|x|)^2\right]\Rightarrow 
  \left[ P(|x+Y| \le 1)\geq G(\frac{2}{7})= \frac{1}{2}-\frac{\Phi(-\sqrt{7/2})}{4\Phi(-\sqrt{2})}\approx 0.40246\right],$$
  and by taking $U=\frac{1}{4},$ we obtain
 $$ \left[\sum_{i=1}^n v_i^2\leq \frac{1}{4}(1+|x|)^2\right]\Rightarrow 
  \left[P(|x+Y| \le 1)\geq G(\frac{1}{4})= \frac{1}{2}-\frac{\Phi(-2)}{4\Phi(-\sqrt{2})}\approx  0.42768\right].$$
\end{cor}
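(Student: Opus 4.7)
The corollary is a direct specialization of Lemma \ref{Ons Lemma 3}, so the plan is essentially to verify that the two displayed equalities are arithmetic consequences of plugging $U=2/7$ and $U=1/4$ into the formula for $G(U)$, and then to confirm the numerical approximations. No new probabilistic content is required.

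First, I would invoke Lemma \ref{Ons Lemma 3} with $U=2/7$: under the hypothesis $\sum_{i=1}^n v_i^2\leq \tfrac{2}{7}(1+|x|)^2$, the lemma immediately yields $P(|x+Y|\le 1)\geq G(2/7)$. To rewrite $G(2/7)$ in the claimed form, I would use the definition
\[
G(U)=\frac{1}{2}\left(1-\frac{1}{2}\,\frac{1-\Phi(U^{-1/2})}{1-\Phi(\sqrt 2)}\right)=\frac{1}{2}-\frac{1}{4}\,\frac{1-\Phi(U^{-1/2})}{1-\Phi(\sqrt 2)},
\]
apply the elementary symmetry $1-\Phi(t)=\Phi(-t)$, and substitute $U^{-1/2}=\sqrt{7/2}$. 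This gives exactly $G(2/7)=\tfrac{1}{2}-\tfrac{\Phi(-\sqrt{7/2})}{4\Phi(-\sqrt 2)}$. The proof of the second implication is identical, with $U=1/4$ and $U^{-1/2}=2$, yielding $G(1/4)=\tfrac{1}{2}-\tfrac{\Phi(-2)}{4\Phi(-\sqrt 2)}$.

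The only remaining task is the numerical verification $G(2/7)\approx 0.40246$ and $G(1/4)\approx 0.42768$. Using standard tabulated values $\Phi(-\sqrt 2)\approx 0.07865$, $\Phi(-\sqrt{7/2})\approx 0.03085$, and $\Phi(-2)\approx 0.02275$, a short calculation confirms both approximations. Since the passage from the hypothesis to the conclusion is literally an instantiation of Lemma \ref{Ons Lemma 3}, there is no genuine obstacle; the only thing to be careful about is the conversion of the tail notation $1-\Phi(\cdot)$ into $\Phi(-\cdot)$ and the correct identification of $U^{-1/2}$ in each case.
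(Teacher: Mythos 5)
Your proposal is correct and matches the paper's (implicit) argument exactly: the corollary is stated without proof precisely because it is a direct instantiation of Lemma \ref{Ons Lemma 3} at $U=2/7$ and $U=1/4$, combined with the identity $1-\Phi(t)=\Phi(-t)$ and routine numerical evaluation. Nothing further is needed.
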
 
~\\

  \section{Proof of the Theorem}
The proof of Theorem 4 in Boppana and Holzman can be followed with the  exception  that the foregoing Lemma \ref{Ons Lemma 3} is used instead of their Lemma 3. Lemma \ref{Ons Lemma 3} is based on a crucial inequality of  Bentkus and Dzindzalieta \cite{BD}. 
To indicate precisely where the differences occur, we will present the complete proof.
  
   Assume $\sum_{i=1}^nv_i^2\leq 1.$ By inserting zeroes, we may assume that $n\geq 4$ and without loss of generality, by reordering the real numbers, we assume
$$v_n \geq v_{1}  \geq v_{n-1}\geq v_{2}\geq v_{3}\geq ...\geq v_{n-2}\geq 0,$$
so that  
$$v_n+ v_1+ v_{n-1}+ v_{2}\leq \sqrt 4\times\sqrt {v_n^2+ v_1^2+ v_{n-1}^2+ v_{2}^2} \leq 2\sqrt{\sum_{i=1}^n}v_i^2\leq 2,$$ and hence $v_1+v_2\leq 1.$
\\

{\bf STOPPING TIMES:}
For $t\in \{1,2,...,n-1\},$ define 
$$X_t:=\sum_{i=1}^t v_i\epsilon_i,\quad Y_t:=\sum_{i=t+1}^n v_i\epsilon_i,$$
so that $S=X_t+Y_t=\sum_{i=1}^nv_i\epsilon_i$.
Moreover, let
$$A:=\{t\mid t\leq n-1\wedge |X_t|> 1-v_{t+1}\}\subset\{1,\ldots,n-1\};\;\;\;\;
T:=\min(A\cup\{n-1\}).$$
Hence, if $T\le n-2$, then $T$ is the first time the process 
$|X_t|$ exceeds the boundary $1-v_{t+1}$ and
 $T=n-1$ iff $|X_t|\le 1-v_{t+1}$ for all $t\le n-2$.
Since $v_1+v_2\leq 1$ and
in particular,  $|X_1|=v_1\leq 1-v_2,$ we have  $T\geq 2.$ 
Also,
$$
[|X_{s}|\leq 1-v_{s+1},\forall s< T];\quad
|X_{T}| \leq 1;\quad
[[T\le n-2]\Rightarrow [|X_{T}|>1-v_{T+1}]].
$$

Similarly, for $t\in\{1,...,n\},$ define $M_t:=\sum_{i=1}^tv_i$ and let 
$$B:=\{t\mid t\leq n-1 \wedge \;M_t> 1-v_{t+1}\}\subset\{1,2,\ldots,n-1\};\;\;\;\;
K:=\min (B\cup\{n-1\}).$$
In fact, if $K\le n-2$, then $K$ is the first time the process $M_t$ exceeds the boundary $1-v_{t+1}$ and
$K=n-1$ iff $M_t\le 1-v_{t+1}$ for all $t\le n-1$.
Notice that $2\leq K\leq T,$ in contrast to $T,$ $K$ is not random and
$$
[M_{s}\leq 1-v_{s+1},\forall s< K];\quad
M_{K} \leq 1< M_{K+1}  ;\quad
[[K\le n-2]\Rightarrow [M_{K}>1-v_{K+1}]].
$$

  To prove our Theorem \ref{Stelling} we  may assume by symmetry that $X_T\geq 0.$ 
We will divide the proof  into some cases, depending on $T.$
   
  First of all we remark that
  for $i\in\{n-1,n-2\}$ 
$$
P(|S|\leq 1\mid T=i,X_T)%=P(|X_T+Y_T|\leq 1\mid T=i,X_T)
\geq P(Y_T\leq 0\mid T=i,X_T)\geq \frac{1}{2}\geq G(\frac{1}{4})
$$
 and hence also
 \begin{equation}\label{Case12}
P(|S|\leq 1\mid T=i)\geq\frac12\geq G(\frac{1}{4}),\text{ for }i\in\{n-1,n-2\},
\end{equation}
since for
 $T=n-1,$ we have $0\leq |X_T|=X_T\leq 1$ and $|Y_T|=v_n\leq 1,$ so that
$$[Y_T\leq 0]\Rightarrow [-1\leq X_T+Y_T=S\leq 1],$$
whereas
for $T=n-2,$  we have $1-v_{n-1}< X_T\leq 1$ and $|Y_T|\leq v_{n-1}+v_n\leq \sqrt 3-v_1,$ so that $v_{n-1}-\sqrt 3\leq v_1-\sqrt3\leq Y_T,$ and hence  $$[ Y_T\leq 0]\Rightarrow [-1\leq 1-v_{n-1}+v_{n-1}-\sqrt 3\leq X_T+Y_T=S\leq 1].$$

Next, we claim that with
$$U_K(i):=\frac{(K+1)^2-i}{(2K+1)^2}$$ we have
%\begin{equation}
%\label{Case5}
\begin{numcases}
{\sum_{i=T+1}^{n}v_i^2\leq }
%\begin{cases}         
U_K(T)(1+X_T)^2,
\text{ for }\;\; 2\le K\le T\le\frac{3K+2}{2},~T\le n-3 \label{Case3}
\\ 									
% \sum_{i=T+1}^{n}v_i^2\leq  
 U_K\left(\frac{3K+2}{2}\right)(1+X_T)^2,\text{ for } \;\;\frac{3K+2}{2}\le T\le n-3. \label{Case4}
%\end{cases}
\end{numcases}
%\end{equation} 
To show (\ref{Case3}), let
$2\leq K\leq T\leq \frac{3K+2}{2}$ and $T\leq n-3.$
Clearly,  we have (Cauchy-Schwartz) for $K=1,2,...,n-2,$
 (hence $M_{K+1}>1$),
$$1\geq \sum_{i=1}^{K+1}v_i^2\geq \frac{1}{K+1}M_{K+1}^2> \frac{1}{K+1}.$$ 
Therefore, since $v_{T+1}>1-X_T$ 
for  $T\leq n-2,$ we obtain for $3\leq K+1\leq T\leq n-3,$
$$\sum_{i=1}^{T}v_i^2> B_1:=\frac{1}{K+1}+(T-K-1)(1-X_T)^2;\quad\sum_{i=1}^{T}v_i^2>B_2:=T(1-X_T)^2.$$
For $T=K\le n-3$ we stil have $\sum_{i=1}^Tv_i^2>B_2$ and
$$\sum_{i=1}^{T}v_i^2\geq \frac{1}{T}(\sum_{i=1}^{T}v_i)^2= \frac{1}{T}M_T^2\geq \frac{1}{T}X_T^2=\frac{1}{K}X_T^2\geq	\frac{1}{K+1} -(1-X_T)^2,$$
where the last inequality is strict if and only if $X_T\ne K/(K+1)$.
Notice that $$[B_1\geq B_2]\Leftrightarrow [1-X_T\leq \frac{1}{K+1}]\Leftrightarrow [X_T\in [\frac{K}{K+1},1]]\Leftrightarrow [K\leq \frac{X_T}{1-X_T}=K_0=K_0(X_T)],$$
 i.e. for "small" $K$ we have $B_1\geq B_2$ and for  "large" $K$ we have $B_1\leq B_2.$
 
\noindent It follows that, for $2\leq K\leq T\leq \frac{3K+2}{2}$ and $ T\leq n-3,$ we have with $\lambda=\frac{2T-K-1}{2K+1}\geq 0$ and $1-\lambda=\frac{3K+2-2T}{2K+1},$ 
$$\sum_{i=1}^{T}v_i^2= \lambda\sum_{i=1}^{T}v_i^2+(1-\lambda)\sum_{i=1}^{T}v_i^2\geq \lambda B_1+(1-\lambda)B_2=$$ $$=\frac{2T-K-1}{(K+1)(2K+1)}+ \frac{(K+1)^2-T}{2K+1}(1-X_T)^2:=B,$$
so that
 $$\min(B_1,B_2)\leq B\leq \max(B_1,B_2),$$ and (as in Boppana and Holzman (2017))
\begin{align*}
\sum_{i=T+1}^{n}v_i^2
&\leq 
1-\max(B_1,B_2)\leq 1-B= \frac{(K+1)^2-T}{
(K+1)(2K+1)}[2-(K+1)(1-X_T)^2)]\leq
\\&\leq 
\frac{(K+1)^2-T}{(2K+1)^2}(1+X_T)^2=U_K(T)(1+X_T)^2.  
\end{align*}
Notice that $U_K(i)\le\frac14$ if and only if $i\ge K+\frac34$, and that $U_K(K)>\frac14$.

To show (\ref{Case4}), let  $ \frac{3K+2}{2}\leq T\leq n-3.$ Then, the upper bound above is still valid, since in this case we have
$$\sum_{i=1}^T v_i^2>\frac{1}{K+1}+(T-K-1)(1-X_T)^2,$$
so that for $T>\frac{3K+2}{2}$ we have 
$$\sum_{i=1}^T v_i^2>\frac{1}{K+1}+\frac{K}{2}(1-X_T)^2,$$
 which is exactly bound B given in 
Equation (\ref{Case3}) evaluated at $T=\frac{3K+2}{2},$ (where $\lambda=1,B=B_1),$
 so that we obtained (\ref{Case4}).

 Summarizing, we obtained for $ T\leq n-3$
the inequalities (\ref{Case3}) and (\ref{Case4}), %
so that it follows from Lemma \ref{Ons Lemma 3} by taking $x=X_T$ and $Y=Y_T,$ that for $i\leq n-3$ we have
\begin{equation*}
 P(|S|\le 1\mid T=i,X_T)\geq  
 \begin{cases}  
 G(U_K(i)),&\text{for }K\leq i\leq \frac{3K+2}{2}\\
 G(U_K(\frac{3K+2}2)),& \text{for } \frac{3K+2}{2} .
 \end{cases}
 \end{equation*} 
 and hence also
  \begin{equation}\label{PCase34}
 P(|S|\le1\mid T=i)\geq  
 \begin{cases} 
 G(U_K(i)),&\text{for }K\leq i\leq \frac{3K+2}{2}\\
 G(U_K(\frac{3K+2}2)),& \text{for } \frac{3K+2}{2}.
 \end{cases}
 \end{equation} 
 
We can now finish the proof. 
We have to deal with the problem that $U_K(K)>\frac14$.
As in Boppana and Holzmann, \cite[p. 8]{BH}, we remark that in case $K\leq n-4,$ we have $T=K$ if the signs of $\epsilon_1,\epsilon_2,...,\epsilon_K$ are all equal (probability $1/2^{K-1}$) and otherwise $T\geq K+2.$ Namely, if $\epsilon_1,\epsilon_2,...,\epsilon_K$ are not all equal, then
$|X_K|\leq 1-v_{K+1}$ and $|X_{K+1}|\leq 1-v_{K+2},$ so that  $T \geq K+2,$ since by the ordering of the $\nu_i,$
$$|X_K|\leq \sum_{i=1}^{K-1}v_i-v_K=
 M_{K-1}-v_K \leq
 1-v_K-v_K \leq 1-v_{K+1}$$ and similarly also (notice that $K\neq n-3$)
$$|X_{K+1}|\leq |X_{K}| +v_{K+1}\leq 1-2v_K+v_{K+1}\leq 1-v_K\leq 1-v_{K+2}.$$
Therefore, it follows  from (\ref{PCase34}), the fact that these bounds are non-decreasing in $T,$  the inequality $K+2\leq \frac{3K+2}{2}$ and Lemma\;\ref{App} in the Appendix that for $K\leq n-4,$
\begin{align*}
P(|S|&\le  1\mid T\le n-3)=
\\&=\frac{1}{2^{K-1}}P(|S|\leq 1\mid K=T\leq n-3)
+(1-\frac{1}{2^{K-1}})P(|S|\leq 1\mid K+2\le T\le n-3)\geq 
\\&\geq 
\frac{1}{2^{K-1}}G(U_K(K))+(1-\frac{1}{2^{K-1}})G(U_K(K+2))=
\\&=  
\frac{1}{2^{K-1}}G\left(\frac{K^2+K+1}{(2K+1)^2}\right)  +  (1-\frac{1}{2^{K-1}})G\left(\frac{K^2+K-1}{(2K+1)^2}\right)\ge G(\frac14).
\end{align*}
% $$ \rightarrow \frac{1}{2}-\frac{\Phi(-2)}{4\Phi(-\sqrt{2})},\;\;\text{as}\;\; K\rightarrow \infty.$$ 
Hence, in the situation  $K\le n-4,$ we obtain the lower bound 
$$P(|S|\leq 1\mid T\le n-3)\ge G(\frac14)\approx 
0.427685.$$

Finally, as in Boppana and Holzmann, \cite{BH}, one can get rid of the restriction $K\leq n-4. $ Namely, for $K=n-3$ it is still true that $P\{T=K\}=\frac{1}{2^{K-1}}$, and while $T=K+1=n-2$ may occur in this case, it yields a conditional bound of $\frac{1}{2}$ as given in (\ref{Case12}) above. 
Hence, from (\ref{Case12}) and (\ref{PCase34}) we obtain in case
$K=n-3,$ 
$$P(|S|\leq 1\mid K\le T)=\frac{1}{2^{K-1}}P\{|S|\leq 1\mid K=T\}+(1-\frac{1}{2^{K-1}})P\{|S|\leq 1| K+1\leq T\}\geq $$
$$\geq \frac{1}{2^{K-1}}G(U_K(K))+(1-\frac{1}{2^{K-1}})\times\frac{1}{2} \ge G(\frac14),$$
as shown in Lemma \ref{App} in the Appendix.
%$$=  
%\frac{1}{2^{K-1}}G\left(\frac{K^2+K+1}{(2K+1)^2}\right)  +  (1-\frac{1}{2^{K-1}})G(0)\ge %G(\frac14).
%$$
%
%$$\ge  
%\frac{1}{2^{K-1}}G\left(\frac{K^2+K+1}{(2K+1)^2}\right)  +  (1-\frac{1}%{2^{K-1}})G\left(\frac{K^2+K-1}{(2K+1)^2}\right)\ge G(\frac14)
%$$
%which is even better than our stated lower bound in case $K\leq n-4 $. 

The cases $K=n-2$ and $K=n-1$ (hence $T\geq n-2),$  are covered by the conditional bound of $\frac{1}{2}$ in (\ref{Case12})
.

\section{Appendix}

\begin{lem}\label{App}
For all $x>0$ we have
$$G(x):=\frac12\left(1-\frac12\,\frac{1-\Phi(x^{-1/2})}{1-\Phi(\sqrt2)}\right)>\frac{1}{2}(1-3x^2):=F(x).$$
Moreover, with $p_k:=2^{1-k},$ we have for $k\geq 2,$
$$ h(k):=p_kG(\frac{k^2+k+1}{(2k+1)^2})  +  (1-p_k)
G(\frac{k^2+k-1}{(2k+1)^2})\geq G(\frac{1}{4}).$$
Since $G$ is decreasing and $G(0)=\frac12$, we also have for $k\geq 2,$
$$ p_kG(\frac{k^2+k+1}{(2k+1)^2})  +  (1-p_k)
\frac12\geq G(\frac{1}{4}).$$

 \end{lem}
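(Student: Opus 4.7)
The plan is to handle the three claims separately, with the concavity of $G$ on a neighbourhood of $\tfrac14$ serving as the unifying tool.

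For the first claim $G(x)>F(x)$, I would rewrite the inequality as $3x^2/2 > c_*(1-\Phi(x^{-1/2}))$, where $c_*=\tfrac{1}{4(1-\Phi(\sqrt2))}$. Substituting $y=x^{-1/2}>0$ turns this into the single inequality
\[
y^4(1-\Phi(y))<6(1-\Phi(\sqrt2))\quad\text{for all }y>0.
\]
I would bound the left side using the standard Mills inequality $1-\Phi(y)<\phi(y)/y$, reducing to $y^3\phi(y)<6(1-\Phi(\sqrt2))$. An elementary calculation shows $y^3\phi(y)$ is maximised at $y=\sqrt3$ with value $3\sqrt3\,e^{-3/2}/\sqrt{2\pi}$, so the claim reduces to the explicit numerical inequality $\sqrt3 < 2\sqrt{2\pi}\,e^{3/2}(1-\Phi(\sqrt2))$, which can be verified from a sufficiently precise tabulated value of $\Phi(\sqrt2)$.

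For the second claim, I would first establish strict concavity of $G$ on $(0,1/3)$. From $G(x)=\tfrac12-c_*(1-\Phi(x^{-1/2}))$, two differentiations give
\[
G''(x)=-\tfrac{c_*}{4}\,x^{-7/2}\phi(x^{-1/2})(1-3x),
\]
which is negative precisely on $(0,1/3)$. The points $a_k:=(k^2+k-1)/(2k+1)^2$ and $b_k:=(k^2+k+1)/(2k+1)^2$ both lie in that interval for every $k\ge2$: the upper bound $b_k<1/3$ reduces to $(k-1)(k+2)>0$. A short calculation using $b_k-\tfrac14=3/(4(2k+1)^2)$ and $b_k-a_k=2/(2k+1)^2$ yields the key identity $\tfrac14=\tfrac38 a_k+\tfrac58 b_k$, valid for every $k$. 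Concavity of $G$ on the interval containing $a_k$, $\tfrac14$, $b_k$ therefore gives $G(1/4)\ge\tfrac38 G(a_k)+\tfrac58 G(b_k)$.

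Combining this with $h(k)=(1-p_k)G(a_k)+p_kG(b_k)$ produces
\[
h(k)-G(1/4)\ \ge\ \bigl(\tfrac58-p_k\bigr)\bigl(G(a_k)-G(b_k)\bigr),
\]
and both factors are nonnegative: $p_k=2^{1-k}\le\tfrac12<\tfrac58$ for $k\ge2$, and $G$ is decreasing so $G(a_k)>G(b_k)$. This finishes the second claim, and the third claim is immediate since $G(a_k)\le\lim_{x\downarrow0}G(x)=\tfrac12$, so replacing $G(a_k)$ with $\tfrac12$ only enlarges the left-hand side. The only genuine obstacle is the numerical step in the first claim: the Mills bound leaves only about a $2\%$ margin against the target, so the inequality $\sqrt3<2\sqrt{2\pi}\,e^{3/2}(1-\Phi(\sqrt2))$ must be justified with sufficiently tight numerics; the concavity reasoning in the last two claims is otherwise clean and free of arithmetic delicacy.
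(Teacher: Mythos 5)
Your proof of the first claim is correct and is essentially the paper's own argument (rewrite as $y^4\bar\Phi(y)<6\bar\Phi(\sqrt2)$ with $y=x^{-1/2}$, apply the Mills bound, maximize $y^3\varphi(y)$ at $y=\sqrt3$, and check the roughly $2\%$ numerical margin). The identity $\tfrac14=\tfrac38a_k+\tfrac58b_k$ and the computation of $G''$ are also correct. The problem is the step where you combine them: concavity of $G$ on $(0,1/3)$ gives $G(\tfrac14)=G(\tfrac38a_k+\tfrac58b_k)\ge\tfrac38G(a_k)+\tfrac58G(b_k)$, and substituting this into $h(k)=\tfrac38G(a_k)+\tfrac58G(b_k)+(\tfrac58-p_k)(G(a_k)-G(b_k))$ yields
$$h(k)-G(\tfrac14)\ \le\ (\tfrac58-p_k)\bigl(G(a_k)-G(b_k)\bigr),$$
i.e.\ an \emph{upper} bound, not the lower bound you assert. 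Your displayed inequality would follow from \emph{convexity} of $G$ between $a_k$ and $b_k$, which is exactly what fails there. It is not merely a formal slip: at $k=2$ one has $a_2=\tfrac15$, $b_2=\tfrac{7}{25}$, $h(2)-G(\tfrac14)\approx0.00548$ while $(\tfrac58-\tfrac12)(G(\tfrac15)-G(\tfrac{7}{25}))\approx0.00664$, so the claimed intermediate inequality is false. To salvage your scheme you would have to show quantitatively that the concavity defect $G(\tfrac14)-\tfrac38G(a_k)-\tfrac58G(b_k)$ (about $0.00117$ at $k=2$) is dominated by $(\tfrac58-p_k)(G(a_k)-G(b_k))$; that is true but is precisely the content you have not proved. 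Since your third claim is reduced to the second, it inherits the gap.

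For comparison, the paper avoids this trap by using concavity in a different variable and in the legitimate ``above the chord'' direction. It first notes $p_k\le\tfrac12$ reduces the second claim to $\tfrac12\Phi(a_k^{-1/2})+\tfrac12\Phi(b_k^{-1/2})\ge\Phi(2)$, then writes $a_k^{-1/2}=2/\sqrt{1-\tfrac54\varepsilon}$ and $b_k^{-1/2}=2/\sqrt{1+\tfrac34\varepsilon}$ with $\varepsilon=(k+\tfrac12)^{-2}$, shows each map $\varepsilon\mapsto\Phi(2/\sqrt{1+\xi\varepsilon})$ is concave on $[0,4/9]$, and checks the two endpoints: the value is exactly $\Phi(2)$ at $\varepsilon=0$ and is $\tfrac12\Phi(\sqrt3)+\tfrac12\Phi(3)>\Phi(2)$ at $\varepsilon=4/9$; a concave function on an interval is at least the minimum of its endpoint values, which settles all $k\ge1$ at once. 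You would need either to adopt that route or to supply the missing quantitative control of the concavity defect.
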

\begin{proof}For the first statement we have to show that 
\begin{equation*}%\label{first}
\left[\bar{\Phi}(x^{-1/2})<\frac{3}{8\bar{\Phi}(\sqrt2)}x^2=\frac{3}{2c_*}x^2\right]\hbox{ or equivalently } \left[\frac{\bar{\Phi}(x^{-1/2})}{x^2}<\frac{3}{8\bar{\Phi}(\sqrt2)}\approx 0.4714
\right],\end{equation*}
where $\bar\Phi(x)=1-\Phi(x)=\Phi(-x)$, and as in Bentkus and Dzindzalieta \cite{BD} 
$$c_*:=\frac{1}{4\bar{\Phi}(\sqrt{2})}\approx 3.178
 .$$
By substituting $y=\frac{1}{\sqrt{x}},$ it is  equivalent with showing that for $y> 0,$
 $$H(y):=y^4\bar{\Phi}(y)\leq \frac{3}{8\bar{\Phi}(\sqrt2)}\approx 0.4714.$$
However,
   since for $y>0,$ we have $y\bar{\Phi}(y)=y\int_y^\infty \phi(x)dx\leq \int_y^\infty x\phi(x)dx=\phi(y),$
   %   $r(y):=\frac{\phi(y)}{y\bar{\Phi}(y)}\geq 1,$  so that then $\bar{\Phi}(y)\leq \frac{\phi(y)}{y}$ (see Pinelis (2012), page 8/22), 
    it is sufficient to show that for $y>0,$  
$$L(y):=y^3\phi(y)\leq 0,4714.$$ 
The function $L$ has a maximum in $y=\sqrt3$ and $L(\sqrt3)\approx 0,4625<0,4714,$ so that we are done.
\\
\\
The second inequality in the Lemma is equivalent to
$$
p_k\Phi(b^{-1/2})+(1-p_k)\Phi(a^{-1/2})\ge\Phi(2),
$$
where
$$0\le a:=\frac{k^2+k-1}{(2k+1)^2}<\frac14<b:=\frac{k^2+k+1}{(2k+1)^2}\le\frac13.
$$
It is sufficient to prove this inequality with $p_k$ replaced by $p_2=\frac12$,
since $\Phi$ is increasing and $p_k\le p_2=\frac12$ so that
$$
p_k\Phi(b^{-1/2})+(1-p_k)\Phi(a^{-1/2})
\ge \frac12\Phi(b^{-1/2})+\frac12\Phi(a^{-1/2}).
$$
Consider
$$Z_\xi(\varepsilon)=\frac{2} {\sqrt{1+\xi\varepsilon}}.$$
Notice that with $\varepsilon=(k+1/2)^{-2}$ we have $a^{-1/2}=Z_\xi(\varepsilon)$ for $\xi=-5/4$ and $b^{-1/2}=Z_\xi(\varepsilon)$ for $\xi=3/4$.
Denote the density function of the standard normal distribution by $\varphi$, so that the derivative $\Phi'$ statisfies $\Phi'=\varphi$.
Consider the composition $(\Phi Z_\xi)(\varepsilon)=\Phi(Z_\xi(\varepsilon))$, then using
$\varphi'(z)=-z\varphi(z)$ one finds 
$$(\Phi Z_\xi)''(\varepsilon)=-\frac12\cdot(\varphi Z_\xi)(\varepsilon)(1+\xi\varepsilon)^{-7/2}\xi^2(1-3\xi\varepsilon).$$
We conclude that $(\Phi Z_\xi)(\varepsilon)$ is concave function in $\varepsilon$ if $(1-3\xi\varepsilon)\ge0$.
Thus for $\xi=3/4$ we need $\varepsilon\le4/9=(1+1/2)^{-2}$. Hence $(\Phi Z_\xi)(\varepsilon)$ is concave on $[0,4/9].$
It is clear that we have $\Phi(Z_\xi(0))=\Phi(2)$ and for $k=1$ we have $\varepsilon=4/9$, $a=1/9$ and $b=1/3,$ so that
$$\frac12\Phi(b^{-1/2})+\frac12\Phi(a^{-1/2})=\frac12\Phi(\sqrt3)+\frac12\Phi(3)\ge0.9785>0.9773>\Phi(2).
$$
This proves the second statement of the Lemma.
 \end{proof}

\end{document}